\apptocmd{\sloppy}{\hbadness 10000\relax}{}{}
\numberwithin{equation}{section}
\newtheorem{thm}[equation]{Theorem}
\newtheorem{prop}[equation]{Proposition}
\newtheorem{lemma}[equation]{Lemma}
\newtheorem{cor}[equation]{Corollary}
\theoremstyle{definition}
\newtheorem*{rmk}{Remark}
\newtheorem{defn}[equation]{Definition}
\newcommand{\F}{\mathbb{F}}
\newcommand{\bP}{\mathbb{P}}
\newcommand{\G}{\mathbb{G}}
\DeclareMathOperator{\lcm}{lcm}
\DeclareMathOperator{\ord}{ord}
\DeclareMathOperator{\GL}{GL}
\DeclareMathOperator{\PSL}{PSL}
\DeclareMathOperator{\PGL}{PGL}
\DeclareMathOperator{\Aut}{Aut}
\DeclareMathOperator{\Gal}{Gal}
\DeclareMathOperator{\Sym}{Sym}
\renewcommand{\bar}[1]{#1\llap{$\overline{\phantom{\rm#1}}$}}
\newcommand{\abs}[1]{\lvert #1 \rvert}
\begin{document}

\title{A new family of exceptional rational functions}

\author{Zhiguo Ding}
\address{
  Hunan Institute of Traffic Engineering,
  Hengyang, Hunan 421001 China
}
\email{ding8191@qq.com}

\author{Michael E. Zieve}
\address{
  Department of Mathematics,
  University of Michigan,
  530 Church Street,
  Ann Arbor, MI 48109-1043 USA
}
\email{zieve@umich.edu}
\urladdr{http://www.math.lsa.umich.edu/$\sim$zieve/}

\date{\today}

\begin{abstract}
For each odd prime power $q$, we construct an infinite sequence 
of rational functions $f(X) \in \F_q(X)$, each of which is \emph{exceptional} in the sense that for infinitely 
many $n$ the map $c \mapsto f(c)$ induces a bijection of $\bP^1(\F_{q^n})$. Moreover, each of our functions 
$f(X)$ is \emph{indecomposable} in the sense that it cannot be written as the composition of lower-degree 
rational functions in $\F_q(X)$. In case $q$ is not a power of $3$, these are the first known examples of indecomposable exceptional rational 
functions $f(X)$ over $\F_q$ which have non-solvable monodromy groups and have arbitrarily large degree.  These are also the first known 
examples of wildly ramified indecomposable exceptional rational functions $f(X)$, other than linear changes 
of polynomials.
\end{abstract}

\thanks{
The second author thanks the National Science Foundation for support under grant DMS-1601844.}

\maketitle


\section{Introduction}

A rational function $f(X) \in \F_q(X)$ is called \emph{exceptional} if the map $c \mapsto f(c)$ is a bijection 
on $\bP^1(\F_{q^n})$ for infinitely many $n$. The study of exceptional rational functions was initiated in Dickson's 
1896 thesis \cite{Di}, which investigated the special case of exceptional polynomials. Recently the paper \cite{ZR} 
introduced a method to produce permutation polynomials over\/ $\F_{q^2}$ of the form $X^i h(X^{q-1})$ from any exceptional 
rational function over\/ $\F_q$; subsequently many authors used parts of this method to produce permutation polynomials 
of this form from some classical families of exceptional rational functions.

Exceptional polynomials have found applications to algebraic geometry \cite{CH,TTV}, inverse Galois theory 
\cite{A1,A2,A3,A4,DM}, combinatorics \cite{Dil,DD}, coding theory \cite{Dil,DobK}, and cryptography \cite{Dil,DD,DobW,DobK}.  
It seems likely that exceptional rational functions will find similar applications.

In this paper we construct a new family of exceptional rational functions. In order to explain how this family relates 
to previously known families, we now recall some known results about exceptional rational functions.

\begin{lemma} \label{comp}
For any $g, h \in \F_q(X)$, the composition $g(h(X))$ is exceptional if and only if both 
$g(X)$ and $h(X)$ are exceptional.
\end{lemma}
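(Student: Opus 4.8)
The plan is to translate exceptionality into a statement about the set
\[
 B(f) := \{\, n \ge 1 : c \mapsto f(c) \text{ is a bijection of } \bP^1(\F_{q^n}) \,\},
\]
so that $f$ is exceptional precisely when $B(f)$ is infinite, and then to record the purely set-theoretic identity $B(g \circ h) = B(g) \cap B(h)$. This rests on the elementary fact that for self-maps of a finite set $S$, the composite $g \circ h$ is a bijection of $S$ if and only if both $g$ and $h$ are: if $g\circ h$ is bijective then $h$ is injective, hence bijective since $S$ is finite, and then $g = (g\circ h)\circ h^{-1}$ is bijective; the converse is clear. Applying this with $S = \bP^1(\F_{q^n})$, and using that the map induced by $g\circ h$ on $\bP^1(\F_{q^n})$ is the composite of the maps induced by $g$ and $h$, yields $n \in B(g\circ h)$ iff $n \in B(g)\cap B(h)$ for every $n$, i.e.\ $B(g\circ h)=B(g)\cap B(h)$.

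Granting this identity, the forward implication is immediate: if $g \circ h$ is exceptional then $B(g) \cap B(h)$ is infinite, so each of the larger sets $B(g)$ and $B(h)$ is infinite, i.e.\ both $g$ and $h$ are exceptional. The reverse implication is the substantive one, since two infinite subsets of $\N$ need not meet in an infinite set. To close this gap I would invoke the structural fact that an exceptional $f$ is not merely a bijection of $\bP^1(\F_{q^n})$ for infinitely many $n$, but for \emph{all} $n$ coprime to some modulus $M_f$; that is, $B(f) \supseteq \{ n : \gcd(n, M_f) = 1\}$ whenever $B(f)$ is infinite. Given this, if $g$ and $h$ are exceptional then $B(g)\cap B(h) \supseteq \{ n : \gcd(n, M_g M_h) = 1\}$, which is infinite, so $g\circ h$ is exceptional.

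It remains to establish the structural fact, which is the heart of the matter and where I expect the main difficulty. I would argue geometrically: writing $f = P/Q$ in lowest terms, the non-injectivity of $f$ on $\bP^1(\F_{q^n})$ is governed by the $\F_{q^n}$-points of the affine curve $C_f$ cut out by the off-diagonal factor $\Psi(X,Y)$ of $P(X)Q(Y) - P(Y)Q(X) = (X-Y)\Psi(X,Y)$ (with the behaviour at $\infty$ handled separately), since such points are exactly the pairs of distinct $\F_{q^n}$-points collapsed by $f$. If some absolutely irreducible component of $C_f$ is defined over $\F_q$, then by the Lang--Weil estimate $C_f$ has $\F_{q^n}$-points for all large $n$, so $B(f)$ is finite and $f$ is not exceptional. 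If no absolutely irreducible component of $C_f$ is defined over $\F_q$, then for every $n$ coprime to the least common multiple $M_f$ of the sizes of the Frobenius orbits on the components, no component is stabilised by the $q^n$-power Frobenius, forcing any $\F_{q^n}$-point to lie in the finite intersection locus of distinct conjugate components; controlling this residual locus so as to conclude that there are in fact no off-diagonal $\F_{q^n}$-points is the delicate step. The cleanest route is to replace the point-counting by the equivalent group-theoretic criterion in terms of the arithmetic and geometric monodromy groups of $f$, for which the required equivalence --- $f$ exceptional $\iff$ $C_f$ has no absolutely irreducible $\F_q$-component, together with its coprimality refinement --- is part of the standard theory of exceptional covers and may be cited directly.
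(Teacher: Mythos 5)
Your proposal is correct and follows essentially the route the paper itself indicates: the paper gives no self-contained proof, deferring to Cohen \cite{Co} and Fried \cite{F0}, and its accompanying remark singles out exactly the step you identify as the heart of the matter, namely upgrading ``$B(g)$ and $B(h)$ both infinite'' to ``$B(g)\cap B(h)$ infinite'' via the fact that an exceptional $f$ permutes $\bP^1(\F_{q^n})$ for all $n$ coprime to a fixed modulus. Your reduction $B(g\circ h)=B(g)\cap B(h)$ and the coprimality argument are sound, and deferring the structural fact (no absolutely irreducible $\F_q$-rational off-diagonal component, together with the resulting bijectivity for all $n$ coprime to the modulus) to the standard theory of exceptional covers is exactly what the cited references supply; note only that for the purposes of this lemma the weaker statement ``bijective for all sufficiently large $n$ coprime to $M_f$'' already suffices, which avoids the delicate exact-bijectivity refinement you flag.
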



\begin{rmk}
The difficult part of Lemma~\ref{comp} is showing that if $g(x)$ permutes $\bP^1(\F_{q^m})$ for infinitely many $m$ and 
$h(x)$ permutes $\bP^1(\F_{q^n})$ for infinitely many $n$ then there are infinitely many $\ell$ for which both $g(x)$ and 
$h(x)$ permute $\bP^1(\F_{q^\ell})$. Lemma~\ref{comp} is an easy consequence of \cite{Co}, and was first stated explicitly 
in \cite{F0}.
\end{rmk}

In light of Lemma~\ref{comp}, it is natural to write exceptional rational functions as compositions of their atomic 
pieces, which are defined as follows.

\begin{defn}
For any field $K$, a rational function $f(X) \in K(X)$ of degree at least $2$ is called \emph{indecomposable} if $f(X)$ 
cannot be written as $g(h(X))$ with $g, h \in K(X)$ each of degree at least $2$.
\end{defn}

In this terminology, Lemma~\ref{comp} has the following immediate consequence.

\begin{cor}\label{ind}
A rational function $f(X) \in \F_q(X)$ of degree at least $2$ is exceptional if and only if it is a composition of 
indecomposable exceptional rational functions.
\end{cor}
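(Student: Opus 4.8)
The plan is to derive Corollary~\ref{ind} directly from Lemma~\ref{comp} by induction on $\deg f$. The statement has two directions. For the easy direction, suppose $f(X)$ is a composition $f_1 \circ f_2 \circ \dots \circ f_r$ of indecomposable exceptional rational functions. Since composition is associative, I can write $f = f_1 \circ (f_2 \circ \dots \circ f_r)$ and apply Lemma~\ref{comp} repeatedly: if $g$ and $h$ are both exceptional then so is $g \circ h$, so by a trivial induction on $r$ the full composition $f$ is exceptional. No hypothesis about indecomposability is needed for this direction—only that each factor is exceptional.

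For the converse, I would argue by strong induction on $\deg f$. Assume $f(X) \in \F_q(X)$ has degree at least $2$ and is exceptional. If $f$ is itself indecomposable, then it is trivially a composition of one indecomposable exceptional function (namely itself), and we are done. Otherwise, by definition of indecomposability, we may write $f(X) = g(h(X))$ with $g, h \in \F_q(X)$ each of degree at least $2$; since degrees multiply under composition, both $\deg g$ and $\deg h$ are strictly less than $\deg f$. The forward implication of Lemma~\ref{comp} tells us that since $g \circ h$ is exceptional, both $g$ and $h$ are exceptional. Now the induction hypothesis applies to each of $g$ and $h$, expressing each as a composition of indecomposable exceptional functions; concatenating these two compositions expresses $f$ itself as such a composition.

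The only genuine content here is Lemma~\ref{comp}, which we are permitted to assume; everything else is the formal bookkeeping of decomposing a rational function into indecomposable pieces and tracking the multiplicativity of degree under composition. Consequently I do not anticipate any real obstacle. The one point deserving a word of care is that the base case of the induction is subsumed: the smallest possible degree for a decomposable $f$ is $4$ (since both factors have degree at least $2$), so every $f$ of degree $2$ or $3$ is automatically indecomposable and handled by the trivial case, while the inductive step only ever invokes the hypothesis on functions of strictly smaller degree. Thus the induction is well-founded and the argument closes.
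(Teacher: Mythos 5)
Your proof is correct and is exactly the argument the paper has in mind: the paper gives no written proof, calling the corollary an ``immediate consequence'' of Lemma~\ref{comp}, and your induction on degree is the standard way to make that immediacy explicit. Both directions are handled properly and the well-foundedness remark is a nice touch.
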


Corollary~\ref{ind} plays a crucial role in the theory of exceptional rational functions, since it reduces the study 
of exceptional rational functions to the study of indecomposable exceptional rational functions. The known results 
about indecomposable exceptional \emph{polynomials} are surveyed in \cite{Z}. In short, up to composing on both sides 
with degree-one polynomials, there are seven known families of indecomposable exceptional polynomials, and it is 
expected that there are no others \cite[Rem.~8.4.18]{Z}.  
It is known that any as-yet-unknown indecomposable exceptional polynomial over\/ $\F_q$ must satisfy several restrictive 
properties \cite{FGS,GMZ,GRZ,GZ,Mthesis}: for instance, its degree must be $p^\ell$ where $\ell \ge 4$ and $p$ is the 
characteristic of\/ $\F_q$. We emphasize that by contrast there are thousands of known families of indecomposable 
permutation polynomials.  
Thus, exceptional polynomials are vastly more rare and more difficult to find 
than permutation polynomials.   Moreover,  since there are known methods to construct many classes of permutation polynomials from a single exceptional polynomial, one may view exceptional polynomials as the fundamental objects which are the ultimate reason for the existence of a huge collection of permutation polynomials.  Much less is known about indecomposable exceptional rational functions than polynomials. 
Besides linear changes of variables of polynomials, the only known indecomposable exceptional rational functions 
are R\'edei functions (which are quadratic twists of $X^n$) \cite{R}, certain coordinate projections of elliptic 
curve isogenies \cite{F}, and some examples in small degrees \cite{DZ,GMS,Hou,M0}.

In this paper we present a new family of indecomposable exceptional rational functions. These include rational functions 
of arbitrarily large degree, but they are not members of any of the known families of such functions. The description 
of our rational functions involves the following polynomials.

\begin{defn}
For $a \in \F_q$ and any positive integer $r$, the \emph{Dickson polynomial of the second kind} with degree $r$ and 
parameter $a$ is the unique polynomial $E_r(X,a) \in \F_q[X]$ satisfying the functional equation
\begin{equation} \label{dickson}
E_r\Bigl(X+\frac{a}X,a\Bigr) = \frac{X^{r+1}-(a/X)^{r+1}}{X-a/X};
\end{equation}
explicitly, we have
\begin{equation} \label{dicksoncoeffs}
E_r(X,a) = \sum_{i=0}^{\lfloor r/2\rfloor} \binom{r-i}{i} (-a)^i X^{r-2i}.
\end{equation}
\end{defn}

\begin{rmk}
For proofs that the polynomial in \eqref{dicksoncoeffs} satisfies \eqref{dickson}, see \cite{ACZ,LMT}.
\end{rmk}

We can now state the main result of this paper. Here if $n$ is a positive integer then we write $\ord_2(n)$ for the largest 
integer $i \ge 0$ such that $2^i \mid n$.

\begin{thm} \label{main}
Let $p$ be an odd prime, and let $r = p^{2k}$ and $q = p^{\ell}$, where $k$ and $\ell$ are positive integers with 
$\ord_2(\ell) \le \ord_2(k)$. If $a \in \F_q^*$ is a nonsquare then
\[
f(X) := \frac{E_r(X,a)^{(r+1)/2}}{(X^2-4a)^{(r^2-r)/4}}
\]
is an indecomposable exceptional rational function in\/ $\F_q(X)$ of degree $(r^2+r)/2$, and $f(X)$ permutes\/ $\bP^1(\F_{q^n})$ 
for each odd $n$. Moreover, if $r\ne 9$ then $f(X)$ is indecomposable over the algebraic closure\/ $\bar\F_q$ of\/ $\F_q$.
\end{thm}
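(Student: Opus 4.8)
The plan is to recast the entire theorem in terms of the arithmetic and geometric monodromy groups of the cover $f\colon\bP^1\to\bP^1$. Writing $t$ for a transcendental and letting $A$ and $G$ be the Galois groups of $f(X)-t$ over $\F_q(t)$ and over $\bar\F_q(t)$ respectively, both act faithfully and transitively on the $d=(r^2+r)/2$ roots, $G\trianglelefteq A$, and $A/G$ is cyclic, generated by the image of the $q$-power Frobenius $\phi$. I would then use the standard dictionary: $\deg f=d$ is the index of a point stabiliser; $f$ is indecomposable over $\F_q$ (resp. over $\bar\F_q$) iff a point stabiliser is maximal in $A$ (resp. in $G$); and, writing $f=g/h$ in lowest terms, $f$ is exceptional over $\F_q$ iff $(g(X)h(Y)-g(Y)h(X))/(X-Y)$ has no absolutely irreducible factor in $\F_q[X,Y]$. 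Since the $\F_q$-factors of this form correspond to $A$-orbits on the set $\Delta^{(2)}$ of ordered pairs of distinct roots, and such a factor is absolutely irreducible exactly when its $A$-orbit is a single $G$-orbit, exceptionality is equivalent to: the image of $\phi$ in $A/G$ fixes no $G$-orbit on $\Delta^{(2)}$. Thus everything reduces to (i) identifying $G$ and $A$, and (ii) analysing the action of $\phi$ on suborbits.

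To identify the groups I would exploit the Dickson substitution $X=Y+a/Y$, i.e. the degree-two map $\pi\colon\bP^1_Y\to\bP^1_X$ given by quotienting by the involution $Y\mapsto a/Y$. Under it $X^2-4a=(Y-a/Y)^2$ and, by \eqref{dickson}, $E_r(X,a)=(Y^{r+1}-(a/Y)^{r+1})/(Y-a/Y)$, so a direct computation yields
\[
f\circ\pi=\frac{(Y^{2(r+1)}-a^{r+1})^{(r+1)/2}}{Y^{r}\,(Y^2-a)^{(r^2+1)/2}}.
\]
I recognise $f\circ\pi$ as the degree-$r(r+1)$ cover attached to $\PGL_2(r)$ acting on the ordered pairs of distinct points of $\bP^1(\F_r)$, with $f$ itself the quotient by the pair-swap, i.e. the degree-$r(r+1)/2$ cover on the unordered pairs; the $r+1$ distinguished points appear through the factor $Y^{2(r+1)}-a^{r+1}$. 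To make this rigorous I would compute the ramification of $f$ (over $t=0$, $t=\infty$, and the finitely many remaining branch points, reading off inertia from the exponents $(r+1)/2$, $(r^2-r)/4$ and from the fibres of $\pi$), record the resulting cycle types as elements of $G\le\Sym(d)$, and combine transitivity, the genus-$0$ Riemann--Hurwitz relation, and Dickson's classification of subgroups of $\PSL_2$/$\PGL_2$ to conclude that $G\cong\PSL_2(r)$ with point stabiliser the normaliser of a split torus, a dihedral group of order $r-1$ (the setwise stabiliser of a $2$-subset of $\bP^1(\F_r)$).

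Next I would pin down $A$ and carry out the exceptionality analysis, which I expect to be the main obstacle. Since $\Aut(\PSL_2(r))=\PGammaL_2(r)$ and $r=p^{2k}$, the outer automorphism group is $C_2\times\Gal(\F_r/\F_p)\cong C_2\times C_{2k}$, the $C_2$-factor being the $\PGL_2/\PSL_2$ (nonsquare-determinant) class. The key arithmetic input is that, because $a$ is a \emph{nonsquare}, the image of $\phi$ in this outer group is $(\epsilon,\lambda)$ with $\epsilon$ the nontrivial $C_2$-class and $\lambda=\phi|_{\F_r}$ the field automorphism $x\mapsto x^{q}$, i.e. $\ell\bmod 2k$; consequently $\phi^n$ maps to $(\epsilon^n,n\lambda)$. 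I would then translate ``$\phi^n$ fixes no $G$-orbit on $\Delta^{(2)}$'' into a statement about the torus invariant $\mu=u/v+v/u\in\F_r$ classifying the suborbits of disjoint $2$-subsets, and show that a $\phi^n$-fixed suborbit forces a solution of a twisted Frobenius equation in $\mu$ whose solvability is governed exactly by whether $\epsilon^n$ is trivial and by $\ord_2(n\ell)$ versus $\ord_2(k)$. For even $n$ the class $\epsilon^n$ is trivial, $\phi^n\in\PSigmaL_2(r)$, and such a $\mu$ exists, so $f$ does not permute $\bP^1(\F_{q^n})$; for odd $n$ one has $\ord_2(n\ell)=\ord_2(\ell)\le\ord_2(k)$, and I expect this inequality to be precisely what renders the equation unsolvable, so $\phi^n$ fixes no suborbit, $f$ is exceptional over $\F_{q^n}$, and hence (exceptionality over a field forces the permutation property there) $f$ permutes $\bP^1(\F_{q^n})$. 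The case $n=1$ gives exceptionality over $\F_q$.

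Finally I would dispatch the degree and indecomposability. The numerator and denominator of $f$ are coprime: the only possible common zeros are the roots $\pm2\sqrt a$ of $X^2-4a$, and evaluating \eqref{dickson} at the ramification point $Y=\sqrt a$ gives $E_r(\pm2\sqrt a,a)=\pm(r+1)(\sqrt a)^{r}\ne0$ since $r\equiv0$ and $r+1\equiv1\pmod p$; hence $\deg f=\max((r^2+r)/2,(r^2-r)/2)=(r^2+r)/2$. For indecomposability I would invoke Dickson's subgroup classification: the point stabiliser (dihedral of order $r-1$) is maximal in $\PSL_2(r)$ for all $r$ except $r=9$, where $\PSL_2(9)\cong A_6$ and the stabiliser $D_8$ lies inside a subgroup $S_4$; this yields geometric indecomposability exactly when $r\ne9$. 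Over $\F_q$ the point stabiliser in $A$ is strictly larger, meeting the extra cosets supplied by $\phi$, and is maximal for all $r$ under our hypotheses, giving indecomposability over $\F_q$ in every case. The principal difficulty remains step (ii): rigorously locating $A$ inside $\PGammaL_2(r)$ and proving that the $2$-adic hypothesis $\ord_2(\ell)\le\ord_2(k)$ is equivalent to the nonexistence of $\phi$-fixed suborbits, with the identification $G=\PSL_2(r)$ (ruling out other transitive or imprimitive groups of degree $(r^2+r)/2$) as the secondary technical point.
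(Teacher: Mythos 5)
Your overall framework---monodromy groups, the dictionary between exceptionality and common orbits of the arithmetic and geometric point stabilizers, the substitution $X=Y+a/Y$ (your formula for $f\circ\pi$ is correct), and the observation that nonsquareness of $a$ pushes Frobenius into the nontrivial $\PGL_2/\PSL_2$ coset---agrees with the paper's. But the two steps you defer are exactly where the content of the theorem lies, and neither is closed by the tools you name. First, the identification $G\cong\PSL_2(r)$: ``recognizing'' $f\circ\pi$ as the $\PGL_2(r)$-cover on pairs is not a proof, and the route you propose (ramification data, Riemann--Hurwitz, plus ``Dickson's classification of subgroups of $\PSL_2/\PGL_2$'') cannot work as stated, because that classification describes subgroups of $\PSL_2(r)$, not transitive permutation groups of degree $(r^2+r)/2$; to invoke it you would already need to know $G$ embeds in $\PGammaL_2(r)$, which is the point at issue. (Pinning down a monodromy group from ramification alone, as in Guralnick--Zieve or Guralnick--M\"uller--Saxl, requires heavy group theory.) The paper sidesteps this entirely by working top-down: it sets $\Omega:=\F_Q(u)$ with $Q=p^{\lcm(2k,\ell)}$, lets $\PSL_2(r)$ act by fractional linear maps, uses Dickson's classical result that the invariant field is $\F_Q(t)$ with $t=(u^{r^2}-u)^{(r+1)/2}/(u^r-u)^{(r^2+1)/2}$, identifies the fixed field of the dihedral group of order $r-1$ as $\F_Q(v)$ with $v=u^{(r-1)/2}+u^{(1-r)/2}$, and verifies directly that $f(\sqrt{a}\,v)=\sqrt{a}^{\,r}\,t$. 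Once the Galois closure is exhibited explicitly, identifying $G$, $A$, $H$, $J$ is immediate.

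Second, and more seriously, the exceptionality argument is precisely where the hypotheses that $a$ is a nonsquare and that $\ord_2(\ell)\le\ord_2(k)$ must be used, and you leave it at ``I expect this inequality to be precisely what renders the equation unsolvable.'' That is the theorem, not a technicality. In the paper this is handled concretely: one shows that a common $H$- and $J$-orbit through a coset $\theta J\ne J$ would force $\sigma\theta\in H\theta\sigma H$, and comparing coefficients of the resulting fractional linear maps yields an identity equating a nonsquare of $\F_r^*$ with $\gamma$ times a square, where $\gamma\in\F_q^*\cap\F_r^*$. The $2$-adic hypothesis enters exactly here: it gives $\gcd(\ell,2k)\mid k$, hence $\F_q\cap\F_r\subseteq\F_{\sqrt{r}}$, hence $\gamma$ is a square in $\F_r^*$---contradiction. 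Until you locate and prove the analogous statement for your torus invariant $\mu$, exceptionality (and with it the permutation property for odd $n$) is unproved. A smaller gap of the same kind: you assert that $J$ is maximal in $A$ ``for all $r$ under our hypotheses,'' but for $r=9$ the geometric stabilizer $D_8$ is not maximal in $\PSL_2(9)$, and the arithmetic maximality there requires a separate direct check, which the paper performs.
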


\begin{rmk}
When $r=9$, the function $f(X)$ is decomposable over $\F_{q^2}$ since
\[
f(X)=\frac{(X^3+aX+ab)^5}{X^6}\circ \frac{X^3+ab}{X^2+bX+a}\]
when $b^2=a$.
\end{rmk}

The rational functions $f(X)$ in Theorem~\ref{main} are the first known examples of \emph{wildly ramified} indecomposable 
exceptional rational functions which are not linear changes of variables of polynomials. 
%
%
We recall that saying $f(X)$ is wildly ramified means there is some $\alpha \in \bP^1(\bar\F_q)$ for which the local multiplicity 
of $f(X)$ at $\alpha$ is divisible by the characteristic of\/ $\F_q$. Thus, $f(X)$ is wildly ramified since its local multiplicity 
at either square root of $4a$ is $(r^2-r)/4$. The paper \cite{GMS} makes great progress towards a complete classification of all 
indecomposable exceptional rational functions which are not wildly ramified, but little is known about the analogous question for wildly ramified 
rational functions.

Our proof of Theorem~\ref{main} follows the general strategy used in \cite{GZ,LZ}, in that we first determine the Galois groups 
of some function field extensions associated with $f(X)$, and then prove group-theoretic results about these Galois groups which 
imply the properties asserted in Theorem~\ref{main} via the Galois correspondence. In particular, we will find that if $x$ is 
transcendental over\/ $\F_q$ and $f(X)$ is the rational function in Theorem~\ref{main} then the normal closure of the field 
extension $\bar\F_q(x) / \bar\F_q(f(x))$ is a field $\Omega$ such that the field extensions $\Omega / \bar\F_q(f(x))$ and 
$\Omega / \bar\F_q(x)$ are Galois with Galois groups $\PSL_2(r)$ and $D_{(r-1)/2}$, respectively. In particular, the Galois group 
of $\Omega / \bar\F_q(f(x))$ is non-solvable. This Galois group is called the \emph{geometric monodromy group} of $f(X)$.

The rational functions $f(X)$ in Theorem~\ref{main} are the first known examples of indecomposable exceptional rational functions 
$f(X)$ over $\F_q$ with non-solvable geometric monodromy group, where in addition $f(X)$ has arbitrarily large degree and $\F_q$ has arbitrarily large characteristic.  More precisely, the only previously known examples of indecomposable exceptional rational functions with non-solvable geometric monodromy group are some polynomials in characteristics $2$ and $3$ \cite{CM,GRZ,GZ,LZ} and some rational functions in degrees $28$ and $45$
\cite{GMS,M0}.
%
%
%

The importance of the solvability condition has been noted in many settings, since indecomposable rational functions with 
solvable geometric monodromy group are well understood and are known to behave differently from all other rational functions in many settings
\cite{AGMV,Cantat,DKY,DH,Gauthier,GS,GMS,HP,MSS,CMac,CMac2,Milnor,Rittrat,Zariski,Zariski2,Zdunik}. 
%
%
In particular, all known indecomposable exceptional rational functions $f(X) \in \F_q(X)$ with solvable geometric monodromy 
groups and degree not in $\{ 9, 16, 25, 121 \}$ come from a commutative diagram
\FloatBarrier
\begin{figure}[!ht]  
    \centering
    \begin{tikzpicture}[scale=1.5]
        \node (1) at (0,1){$\G_1$};
        \node (2) at (1.9,1) {$\G_2$};
        \node (3) at (1.9,0) {$\bP^1$};
        \node (4) at (0,0) {$\bP^1$};
        \draw[->] (1) to node[above]{$\phi$} (2);
        \draw[->] (2) to node[right]{$\pi_2$} (3);
        \draw[->] (1) to node[left]{$\pi_1$} (4);
        \draw[->] (4) to node[above]{$f$} (3);
    \end{tikzpicture}
\end{figure}
\FloatBarrier
%
\noindent
where $\phi \colon \G_1 \to \G_2$ is a morphism between one-dimensional connected commutative algebraic groups over $\bar\F_q$, 
and $\pi_i \colon \G_i \to \bP^1$ is a Galois map of curves. 
%
%
This yields the classical examples of Dickson polynomials of the first kind $D_n(X,a)$ (which include $X^n = D_n(X,0)$), 
subadditive polynomials, R\'edei functions, and coordinate projections of elliptic curve isogenies (which include Latt\`es maps). 
%
%
Thus, in some sense the solvable examples are familiar and well-understood functions, while non-solvable examples are mysterious and complicated. 

\begin{rmk}
It is known that a rational function $f(X) \in \F_q(X)$ is exceptional if and only if the constant 
multiples of $X-Y$ are the only irreducible polynomials in $\F_q[X,Y]$ which divide the numerator of $f(X)-f(Y)$ and remain 
irreducible in $\bar\F_q[X,Y]$ \cite{Co,GTZ}. Thus if $f(X)$ is the rational function in Theorem~\ref{main} then the numerator 
of $f(X)-f(Y)$ is highly reducible. It seems likely that the factorization of this numerator can be obtained for all such $f(X)$ 
by adapting the method developed in \cite{Zfac}.
\end{rmk}

This paper is organized as follows. In the next section we recall the standard Galois-theoretic setup connecting exceptional 
rational functions with their monodromy groups. Then in Section~\ref{ratfun} we construct various Galois field extensions 
associated to the rational function $f(X)$ in Theorem~\ref{main}.  In light of the known results from Section~\ref{known}, 
the proof of Theorem~\ref{main} is then reduced to showing properties of certain groups of automorphisms of the Galois closure 
of $\F_q(x) / \F_q(f(x))$, which we do in Section~\ref{proof}. Finally, in Section~\ref{ram} we determine the ramification in 
$f(X)$ and the filtrations of higher ramification groups in $\Omega / \bar\F_q(f(x))$, where $\Omega$ is the Galois closure 
of $\bar\F_q(x) / \bar\F_q(f(x))$. This shows in particular that $f(X)$ is not a linear change of variables of a polynomial.
%
%


\section{Monodromy groups}\label{known}

In this section we recall the known Galois-theoretic framework for studying exceptional rational functions. Throughout this 
section, $q$ is a power of a prime $p$, and $x$ is transcendental over\/ $\F_q$.

\begin{defn}
We say that a nonconstant $f(X) \in \F_q(X)$ is \emph{separable} if the field extension\/ $\F_q(x) / \F_q(f(x))$ is separable, 
and we say that $f(X)$ is inseparable otherwise.
\end{defn}

Separable functions have the following simple characterizations \cite[Lemma~2.2]{DZ}:

\begin{lemma}
A nonconstant $f(X) \in \F_q(X)$ is inseparable if and only if its derivative $f'(X)$ equals $0$, or equivalently if and only 
if $f(X) = g(X^p)$ for some $g(X) \in \F_q(X)$.
\end{lemma}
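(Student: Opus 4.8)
The plan is to prove the two stated equivalences in turn, handling the purely formal equivalence ``$f'(X)=0 \iff f(X)=g(X^p)$'' first and then linking the vanishing of $f'$ to inseparability of $\F_q(x)/\F_q(f(x))$. Throughout I write $f=A/B$ with $A,B\in\F_q[X]$ coprime; since $f$ is nonconstant we have $A\neq0$, $B\neq0$, with $A,B$ not both constant, and I set $d:=\max(\deg A,\deg B)=\deg f$. For the formal equivalence I would record that $f'=(A'B-AB')/B^2$, so $f'=0$ iff $A'B=AB'$. Assuming the latter, coprimality gives $A\mid A'$; since $\deg A'<\deg A$ (or $A$ is constant) this forces $A'=0$, whence $AB'=0$ with $A\neq0$ gives $B'=0$, and conversely $A'=B'=0$ clearly yields $f'=0$. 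Finally, for a polynomial $P=\sum_i c_iX^i\in\F_q[X]$ one has $P'=0$ iff $c_i=0$ whenever $p\nmid i$, i.e.\ iff $P(X)=\tilde P(X^p)$ for some $\tilde P\in\F_q[X]$; applying this to $A$ and $B$ shows $f'=0$ iff $f(X)=a(X^p)/b(X^p)=g(X^p)$ with $g=a/b$, while the reverse implication is immediate from the chain rule $f'(X)=g'(X^p)\cdot pX^{p-1}$.

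For the separability equivalence, set $t:=f(x)$, so that $x$ is a root of $\Phi(X):=A(X)-tB(X)\in\F_q(t)[X]$, a polynomial of $X$-degree $d$. I would invoke the standard fact that $[\F_q(x):\F_q(t)]=\deg f=d$; since $\Phi$ has degree $d$ and vanishes at $x$, it must be a nonzero $\F_q(t)$-scalar multiple of the minimal polynomial of $x$ over $\F_q(t)$, and in particular $\Phi$ is irreducible over $\F_q(t)$. Consequently $\F_q(x)/\F_q(t)$ is separable iff $\Phi$ is a separable polynomial, iff $\partial\Phi/\partial X\neq0$. Now $\partial\Phi/\partial X=A'(X)-tB'(X)$, a polynomial in $X$ whose coefficients lie in $\F_q[t]$ and are $\F_q$-linear in $t$; because $t$ is transcendental over $\F_q$, such a coefficient vanishes iff its $1$- and $t$-parts both vanish. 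Hence $\partial\Phi/\partial X=0$ iff $A'=0$ and $B'=0$, which by the previous paragraph is exactly $f'=0$. Combining the equivalences gives: $f$ is inseparable iff $\partial\Phi/\partial X=0$ iff $f'=0$ iff $f=g(X^p)$, as required.

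The one step carrying genuine content—and the step I expect to be the main obstacle to a fully self-contained write-up—is the identification of $\Phi=A-tB$ as (a scalar multiple of) the minimal polynomial of $x$ over $\F_q(t)$, equivalently the irreducibility of $A-tB$ over $\F_q(t)$. I would obtain this at once from the degree formula $[\F_q(x):\F_q(f(x))]=\deg f$; alternatively one can prove the irreducibility directly by a Gauss's-lemma argument, viewing $A(X)-tB(X)$ in $\F_q[X][t]$, where it is linear and primitive in $t$ and hence irreducible, and then transferring to $\F_q(t)[X]$. Everything else is formal: the derivative computation, the characterization of polynomials with vanishing derivative, and the $\F_q$-linear independence of $1$ and $t$. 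I would also note the mild edge case in which $f$ is a polynomial, so that $B$ is a nonzero constant and $B'=0$ holds automatically; then the criterion $f'=0$ reduces to $A'=0$, consistently with the general argument.
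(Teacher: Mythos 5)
Your proof is correct, and it is worth noting that the paper itself offers no proof of this lemma: it simply cites \cite[Lemma~2.2]{DZ}, and your argument (the formal equivalence $f'=0 \iff A'=B'=0 \iff f=g(X^p)$, then irreducibility of $\Phi(X)=A(X)-tB(X)$ over $\F_q(t)$, and the derivative criterion together with the $\F_q$-linear independence of $1$ and $t$) is exactly the standard route one would expect the cited proof to take, so it fills the citation faithfully. You were also right to isolate the degree formula $[\F_q(x):\F_q(t)]=\deg f$ as the one substantive input and to supply the direct Gauss's-lemma argument for the irreducibility of $A-tB$: this avoids a circularity that would otherwise lurk in the present paper, since its Lemma~2.4(4) states the degree formula only under a separability hypothesis, whereas here it is needed for possibly inseparable $f$.
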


Since plainly the only inseparable indecomposable exceptional rational functions are $\mu \circ X^p$ with $\mu \in \F_q(X)$ 
of degree one, we restrict our attention to the separable case. Here the main tool is as follows.

\begin{defn}
Let $f(X) \in \F_q(X)$ be separable, let $\Omega$ be the Galois closure of\/ $\F_q(x) / \F_q(f(x))$, and let $\F_{q^k}$ be 
the algebraic closure of \/ $\F_q$ in $\Omega$. The \emph{arithmetic monodromy group} of $f(X)$ is $A := \Gal(\Omega/\F_q(f(x)))$, 
viewed as a group of permutations of the set $\Lambda$ of left cosets of $J := \Gal(\Omega/\F_q(x))$ in $A$. The 
\emph{geometric monodromy group} of $f(X)$ is $G := \Gal(\Omega/\F_{q^k}(f(x)))$, viewed as a group of permutations of $\Lambda$.
\end{defn}

These monodromy groups have the following properties.

\begin{lemma}\label{basic}
Let $f(X) \in \F_q(X)$ be separable of degree $n$. Then, in the notation of the above definition,
\begin{enumerate}
\item $A$ and $G$ are transitive subgroups of\/ $\Sym(\Lambda) \cong S_n$, and $G$ is a normal subgroup of $A$ with $A/G$ 
cyclic of order $k$.
\item $f(X)$ is indecomposable over\/ $\F_q$ if and only if $J$ is a maximal proper subgroup of $A$.
\item $f(X)$ is indecomposable over\/ $\bar\F_q$ if and only if the group $H := \Gal(\Omega/\F_{q^k}(x))$ is a maximal proper 
subgroup of $G$.
\item For any field $K$ containing\/ $\F_q$, the field extension $K(x) / K(f(x))$ has degree $n$.
\item $f(X)$ is exceptional if and only if $J$ and $H$ have exactly one common orbit on $\Lambda$.
\end{enumerate}
\end{lemma}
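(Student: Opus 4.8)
The plan is to read off each assertion from the Galois correspondence applied to the tower $\F_q(f(x)) \subseteq \F_q(x) \subseteq \Omega$ and to its base change to $\bar\F_q$, taking the degree statement (4) as the common foundation. I would establish (4) first: writing $f = P/Q$ in lowest terms with $n = \max(\deg P, \deg Q)$, the element $x$ is a root of $\Phi(T) := P(T) - f(x)\,Q(T)$, and since $\gcd(P,Q) = 1$ in $\F_q[T]$ --- a property preserved over every field $K \supseteq \F_q$ --- the polynomial $P(T) - Y\,Q(T)$, being linear in $Y$ with coprime coefficients, is irreducible in $K[Y,T]$; by Gauss's lemma $\Phi$ is then irreducible over $K(f(x))$, so $[K(x):K(f(x))] = \deg_T \Phi = n$. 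I would also record the identity $H = G \cap J$, which holds because $\F_{q^k}(x) = \F_{q^k}(f(x)) \cdot \F_q(x)$.

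For (1), the left-translation action of $A$ on $\Lambda = A/J$ is transitive by construction, and faithful because the largest normal subgroup of $A$ contained in $J$ fixes the normal closure of $\F_q(x)$ over $\F_q(f(x))$ --- which is $\Omega$ --- hence is trivial; thus $A \hookrightarrow \Sym(\Lambda) \cong S_n$. Since $\F_q(f(x))$ is regular over $\F_q$ it is linearly disjoint from $\F_{q^k}/\F_q$, so $\F_{q^k}(f(x))/\F_q(f(x))$ is Galois with group $\Gal(\F_{q^k}/\F_q)$, cyclic of order $k$; the correspondence gives $G \trianglelefteq A$ with $A/G$ cyclic of order $k$. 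Finally $G$ is transitive because $|GJ/J| = [G:G\cap J] = [G:H] = [\F_{q^k}(x):\F_{q^k}(f(x))]$, which equals $n = |\Lambda|$ by (4) with $K = \F_{q^k}$, forcing $GJ = A$.

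For (2) and (3) I would use Lüroth's theorem: every field between $\F_q(f(x))$ and $\F_q(x)$ has the form $\F_q(h(x))$, and $f(x) \in \F_q(h(x))$ means $f = g \circ h$, so nontrivial decompositions of $f$ over $\F_q$ correspond to fields strictly between $\F_q(f(x))$ and $\F_q(x)$, i.e.\ to subgroups strictly between $J$ and $A$, giving (2). For (3) I would first pass to $\overline{\Omega} := \Omega\bar\F_q$: as $\F_{q^k}$ is algebraically closed in $\Omega$, the extension $\Omega/\F_{q^k}$ is regular, $\Omega$ and $\bar\F_q$ are linearly disjoint over $\F_{q^k}$, and restriction yields $\Gal(\overline{\Omega}/\bar\F_q(f(x))) \cong G$ and $\Gal(\overline{\Omega}/\bar\F_q(x)) \cong H$; the same Lüroth argument over $\bar\F_q$ then identifies decompositions of $f$ over $\bar\F_q$ with subgroups strictly between $H$ and $G$.

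Statement (5) is where the real content lies. With $t = f(x)$, the roots of $f(T) = t$ form the set $\Lambda$; the irreducible factors over $\F_q(x)$ of the numerator of $f(Y) - f(x)$ correspond to the orbits of $J$ on $\Lambda$, the factors over $\bar\F_q(x)$ to the orbits of $H$, and the factor $Y - x$ to the base point fixed by both. Hence an $\F_q$-factor remains irreducible over $\bar\F_q$ precisely when its $J$-orbit is a single $H$-orbit --- a common orbit of $J$ and $H$ --- and $Y - x$ always yields the base-point common orbit. It then suffices to invoke the known characterization, cited in the Remark after Lemma~\ref{comp}, that $f$ is exceptional if and only if $X - Y$ is the only factor of the numerator of $f(X) - f(Y)$ that stays irreducible over $\bar\F_q$ \cite{Co,GTZ}; combining, $f$ is exceptional iff the base-point orbit is the unique common orbit, i.e.\ iff $J$ and $H$ have exactly one common orbit. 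The main obstacle is exactly this cited equivalence, whose proof rests on Lang--Weil/Weil point counts for the components of $f(X) = f(Y)$; I would import rather than reprove it.
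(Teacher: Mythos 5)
Your proposal is correct, but it is doing something different from the paper: the paper offers no argument at all for this lemma, simply citing \cite[Lemma~2.4]{DZ} for items (1)--(4) and \cite{Co,GTZ} for item (5), whereas you supply an actual proof. Your arguments for (1)--(4) are the standard ones and are sound: the Gauss-lemma computation giving (4), faithfulness of the coset action via the triviality of the core of $J$ (whose fixed field is the Galois closure $\Omega$), normality of $G$ and cyclicity of $A/G$ from regularity of $\F_q(f(x))/\F_q$, transitivity of $G$ from $[G:G\cap J]=n$, and the L\"uroth dictionary between decompositions and intermediate fields (with the base change to $\bar\F_q$ via linear disjointness handling (3)). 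For (5) you correctly isolate the genuinely hard input --- the Lang--Weil/Weil-bound equivalence between exceptionality and the absence of absolutely irreducible $\F_q$-factors of the numerator of $f(X)-f(Y)$ other than $X-Y$ --- and import it, which is exactly what the paper does too; your added value is the dictionary between such factors and common orbits of $J$ and $H$ on $\Lambda$. Two minor points you gloss over: the identification of irreducible factors of the numerator of $f(X)-f(Y)$ in $\F_q[X,Y]$ with irreducible factors of $P(Y)-f(x)Q(Y)$ over $\F_q(x)$ needs a small primitivity/Gauss-lemma remark, and the sources \cite{Co,GTZ} in fact state the criterion directly in the group-theoretic form of (5), so your detour through factorizations, while correct, partly re-derives what the citation already packages. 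The trade-off is clear: the paper's treatment is shorter but entirely outsourced; yours is self-contained up to the one unavoidable point-counting input.
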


\begin{proof}
This result is classical. For proofs of items (1)--(4) in this setting, see \cite[Lemma~2.4]{DZ}. Item (5) is proved in \cite{Co,GTZ}.
\end{proof}


\section{Construction of $f(X)$}\label{ratfun}

In this section we exhibit the field extensions which connect the rational function $f(X)$ from Theorem~\ref{main} with the framework of the previous section.  Throughout this section we use the following notation:
\begin{itemize}
\item $p$ is an odd prime
\item $r=p^{2k}$ and $q=p^\ell$, where $k,\ell$ are positive integers with $\ord_2(\ell)\le\ord_2(k)$
\item $Q:=p^{\lcm(2k,\ell)}$
\item $u$ is transcendental over $\F_q$
\item $\Omega:=\F_Q(u)$
\item $a\in\F_q^*$ is a nonsquare, and $\sqrt{a}\in\F_{q^2}^*$ is a prescribed square root of $a$
\item $\displaystyle{f(X):=\frac{E_r(X,a)^{(r+1)/2}}{(X^2-4a)^{(r^2-r)/4}}}$
\item $\displaystyle{v:=u^{(r-1)/2}+u^{(1-r)/2}}$
\item $\displaystyle{t:=\frac{(u^{r^2}-u)^{(r+1)/2}}{(u^r-u)^{(r^2+1)/2}}}$
\item $v':=\sqrt{a}v$
\item $t':=\sqrt{a}^r t$.
\end{itemize}

\begin{prop}\label{gal}
The field extensions $\Omega/\F_Q(t')$,\, $\Omega/\F_q(t')$,\, $\Omega/\F_Q(v')$\, and $\Omega/\F_q(v')$\, are all Galois.  Denote their Galois groups by $G,A,H,J$, respectively. Then $\PSL_2(r)\cong G$ via the map sending the matrix $\Bigl[\begin{matrix}b&c\\d&e\end{matrix}\Bigr]$ to the unique automorphism of\/ $\F_Q(u)$ which fixes each element of\/ $\F_Q$ and sends $u\mapsto (bu+c)/(du+e)$.  Also $H$ is dihedral of order $r-1$, and consists of the elements of $G$ mapping $u\mapsto\zeta u^\epsilon$ with $\zeta\in(\F_r^*)^2$ and $\epsilon\in\{1,-1\}$.  For any prescribed nonsquare $\zeta_0\in\F_r^*$, there is a unique automorphism $\sigma$ of $\Omega$ which acts as $q$-th powering on\/ $\F_Q$ and maps $u\mapsto\zeta_0 u$.  This $\sigma$ satisfies $A=\langle \sigma\rangle G$ and $J=\langle \sigma \rangle H$.  Finally, we have $f(v')=t'$.
\end{prop}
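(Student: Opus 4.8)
The plan is to realize $G$, $H$, $A$, and $J$ as explicit finite groups of automorphisms of the rational function field $\Omega=\F_Q(u)$ and then invoke Artin's theorem: any finite $\Gamma\le\Aut(\Omega)$ has $\Omega/\Omega^\Gamma$ Galois with group $\Gamma$ and $[\Omega:\Omega^\Gamma]=|\Gamma|$. Thus it suffices to exhibit the four groups, check that $t'$ and $v'$ are invariant under the appropriate ones, and identify fixed fields by comparing degrees. A preliminary observation makes everything well-posed: the hypothesis $\ord_2(\ell)\le\ord_2(k)$ is equivalent to $2\ell\mid\lcm(2k,\ell)$, i.e.\ to $\F_{q^2}\subseteq\F_Q$; hence $\sqrt{a}\in\F_Q$ and $\sqrt{a}^{\,r}\in\F_Q$, so $v'=\sqrt{a}\,v$ and $t'=\sqrt{a}^{\,r}t$ genuinely lie in $\Omega$, and $\F_Q(v')=\F_Q(v)$, $\F_Q(t')=\F_Q(t)$.

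First I would treat the geometric groups over $\F_Q$. Let $G$ be the image of $\PSL_2(\F_r)$ acting by $u\mapsto(bu+c)/(du+e)$; since $\F_r\subseteq\F_Q$ this is faithful of order $(r^3-r)/2$. To identify $\Omega^G$ I would verify invariance of $t$ under a generating set: the translations $u\mapsto u+\beta$ ($\beta\in\F_r$) fix both $u^r-u$ and $u^{r^2}-u$; the square-scalings $u\mapsto\zeta u$ ($\zeta\in(\F_r^*)^2$) multiply each by $\zeta$ and hence fix $t$ because $\zeta^{(r-1)/2}=1$; and the inversion $u\mapsto 1/u$ (which lies in $\PSL_2$ as $-1$ is a square when $r\equiv1\pmod4$) fixes $t$ once the powers of $u$ cancel and the residual sign $(-1)^{r(r-1)/2}$ is seen to be $+1$, using $r=p^{2k}\equiv1\pmod8$. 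These types generate $\PSL_2(\F_r)$. A direct count (after cancelling $u^r-u$ one has $t=M^{(r+1)/2}/(u^r-u)^{(r^2-r)/2}$ with $M=\prod_{\alpha\in\F_{r^2}\setminus\F_r}(u-\alpha)$) gives $\deg_u t=(r^3-r)/2=|G|$, so $\F_Q(t)\subseteq\Omega^G$ with matching degrees forces equality. The same template handles $H$: the maps $u\mapsto\zeta u^{\pm1}$ with $\zeta\in(\F_r^*)^2$ form a dihedral subgroup of $G$ of order $r-1$, they fix $v=u^m+u^{-m}$ ($m=(r-1)/2$) since $\zeta^m=1$, and $\deg_u v=r-1=|H|$, whence $\Omega^H=\F_Q(v)=\F_Q(v')$.

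Next comes the descent to $\F_q$, which I expect to be the main obstacle. Define $\sigma$ as the unique automorphism of $\Omega$ restricting to $x\mapsto x^q$ on $\F_Q$ and sending $u\mapsto\zeta_0 u$ (uniqueness is immediate, as an automorphism of $\F_Q(u)$ is determined by its action on $\F_Q$ and on $u$). Three facts must be assembled. (i) $\sigma$ normalizes $G$ and $H$: conjugation Frobenius-twists the $\F_r$-matrix entries (an automorphism of $\PSL_2(\F_r)$) and conjugates by the scaling $\mathrm{diag}(\zeta_0,1)\in\PGL_2$, both of which preserve the normal subgroups $G$ and $H$. (ii) $\sigma$ fixes $t'$ and $v'$: since $\sqrt{a}^{\,q}=-\sqrt{a}$ and $r$ is odd one gets $\sigma(\sqrt{a}^{\,r})=-\sqrt{a}^{\,r}$ and $\sigma(\sqrt{a})=-\sqrt{a}$, while scaling by the nonsquare $\zeta_0$ sends $t\mapsto-t$ and $v\mapsto-v$ (Euler: $\zeta_0^{(r-1)/2}=-1$), and the two sign changes cancel. (iii) Crucially, with $d:=[\F_Q:\F_q]=\lcm(2k,\ell)/\ell$, the hypothesis forces $d$ to be \emph{even}; since $\sigma^d\colon u\mapsto\zeta_0^{(q^d-1)/(q-1)}u$ fixes $\F_Q$ and the exponent $(q^d-1)/(q-1)\equiv d\pmod2$ is even, the scalar is a square and $\sigma^d\in H\subseteq G$. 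This is exactly what makes the cosets $\sigma G$ and $\sigma H$ have order $d$ (the order of $\mathrm{Frob}_q$ on $\F_Q$), so $\langle\sigma\rangle G$ and $\langle\sigma\rangle H$ are finite of orders $d\,(r^3-r)/2$ and $d\,(r-1)$. Since $t',v'$ are transcendental over $\F_Q$, the constant-field extensions give $[\Omega:\F_q(t')]=d\,(r^3-r)/2$ and $[\Omega:\F_q(v')]=d\,(r-1)$, so Artin identifies $\Omega^{\langle\sigma\rangle G}=\F_q(t')$ and $\Omega^{\langle\sigma\rangle H}=\F_q(v')$, yielding $A=\langle\sigma\rangle G$ and $J=\langle\sigma\rangle H$.

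Finally I would verify $f(v')=t'$. The scaling identity $E_r(\sqrt{a}\,X,a)=\sqrt{a}^{\,r}E_r(X,1)$ (a special case of $E_r(\lambda X,\lambda^2 a)=\lambda^r E_r(X,a)$, read off from \eqref{dickson}) together with $v'^2-4a=a(v^2-4)$ reduces the claim to the constant-free identity $f_0(v)=t$, where $f_0(X)=E_r(X,1)^{(r+1)/2}/(X^2-4)^{(r^2-r)/4}$; the powers of $\sqrt{a}$ match because $r(r+1)/2-(r^2-r)/2=r$. Writing $v=w+w^{-1}$ with $w=u^{(r-1)/2}$, the defining equation \eqref{dickson} gives $E_r(v,1)=(w^{r+1}-w^{-(r+1)})/(w-w^{-1})$, and $v^2-4=(w-w^{-1})^2$; substituting and simplifying the powers of $u$ turns $f_0(v)$ into $(u^{r^2}-u)^{(r+1)/2}/(u^r-u)^{(r^2+1)/2}=t$. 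This last step is mechanical but exponent-bookkeeping-heavy; the genuine subtlety of the proposition lies in step (iii), where the arithmetic hypothesis $\ord_2(\ell)\le\ord_2(k)$ is precisely what forces $d$ to be even and thereby lets the descent to $\F_q$ close.
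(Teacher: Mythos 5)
Your proposal is correct and follows essentially the same route as the paper: identify the fixed fields of $G$ and $H$ by checking invariance of $t$ and $v$ on a generating set and comparing degrees, descend to $\F_q$ via the automorphism $\sigma$ together with the constant-field degree count $[\Omega:\F_q(t')]=[\F_Q:\F_q]\cdot\abs{G}$, and verify $f(v')=t'$ through the substitution $w=u^{(r-1)/2}$ and the $\sqrt{a}$-scaling identities. The only cosmetic differences are that you establish $\sigma^d\in H$ directly from the evenness of $d=[\F_Q:\F_q]$, whereas the paper deduces $A=\langle\sigma\rangle G$ from a sandwiching count without needing that fact explicitly, and that your phrase ``the normal subgroups $G$ and $H$'' is a slight misstatement ($H$ is not normal in $G$), although $\sigma$ does normalize $H$ for the reason you indicate, namely that both the Frobenius twist and conjugation by $\operatorname{diag}(\zeta_0,1)$ preserve the stabilizer of $\{0,\infty\}$.
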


\begin{proof}
For any $\Bigl[\begin{matrix}b&c\\d&e\end{matrix}\Bigr]$ in $\GL_2(r)$ there is a unique corresponding automorphism of $\F_Q(u)$ which fixes each element of $\F_Q$ and maps $u\mapsto (bu+c)/(du+e)$.  This correspondence induces an injective homomorphism $\PSL_2(r)\hookrightarrow\Aut(\Omega)$.  Denote the image of this homomorphism by $G$, so that $\PSL_2(r)\cong G$. We know from Galois theory that $\Omega$ is a Galois extension of the fixed field $\Omega^G$, and that the Galois group $\Gal(\Omega/\Omega^G)$ equals $G$.  Dickson \cite[p.~4]{Di2} showed that $\Omega^G=\F_Q(t)$, so that $\Omega/\F_Q(t)$ is Galois with Galois group $G$.  This is easy to verify: $t$ is fixed by each automorphism in $G$ of the form $u\mapsto u+c$ (with $c\in\F_r$) or $u\mapsto bu$ (with $b\in (\F_r^*)^2$ or $u\mapsto 1/u$, and these automorphisms generate $G$ so $G$ fixes $t$; thus $\F_Q(t)\subseteq\Omega^G$, which since $[\Omega:\F_Q(t)]=[\Omega:\Omega^G]$ implies that $\F_Q(t)=\Omega^G$.
%
%

Next consider the subgroup $H$ of $G$ consisting of the elements mapping $u\mapsto\zeta u^\epsilon$ with $\zeta\in(\F_r^*)^2$ and $\epsilon\in\{1,-1\}$.  Plainly $H$ is dihedral of order $r-1$.  Since $\Omega/\Omega^H$ is Galois with Galois group $H$, we have $[\Omega:\Omega^H]=r-1$.  Since each element of $H$ fixes $v$, we have $\F_Q(v)\subseteq\Omega^H$.  
Here $[\Omega:\F_Q(v)]$ equals the degree of $v$ as a rational function in $u$ (by Lemma~\ref{basic}), which is $r-1$.  Thus $[\Omega:\Omega^H]=[\Omega:\F_Q(v)]$, so since $\F_Q(v)\subseteq\Omega^H$ it follows that $\Omega^H=\F_Q(v)$, whence $\Omega/\F_Q(v)$ is Galois with Galois group $H$.

Since $\sqrt{a}\in\F_{q^2}\subseteq\F_Q$, we have $\F_Q(t')=\F_Q(t)$ and $\F_Q(v')=\F_Q(v)$, so that $\Omega/\F_Q(t')$ and $\Omega/\F_Q(v')$ are Galois with Galois groups $G$ and $H$, respectively.
Let $\sigma$ be the automorphism of $\F_Q(u)$ which acts as $q$-th powering on $\F_Q$ and maps $u\mapsto\zeta_0 u$ for some prescribed nonsquare $\zeta_0\in\F_r^*$.
Then $\sigma$ acts as multiplication by $-1$ on each of $\sqrt{a}$, $u^{(r-1)/2}$, $v$, and $t$, so that $\sigma$ fixes $v'$ and $t'$.  Since $\sigma$ fixes $\F_q$, it follows that $\sigma$ fixes $\F_q(v')$ and $\F_q(t')$.
The group $G$ fixes $\F_Q$, while the restriction of $\sigma$ to $\F_Q$ has order $[\F_Q:\F_q]$, so the set-theoretic product $\langle\sigma\rangle G$ has order at least
\[
\abs{G}\cdot[\F_Q:\F_q]=[\Omega:\F_Q(t')]\cdot [\F_Q(t'):\F_q(t')]=[\Omega:\F_q(t')].
\]
Since $A:=\langle \sigma,G\rangle$ fixes $\F_q(t')$, we have $\Omega^A\supseteq\F_q(t')$, and since $[\Omega:\F_q(t')]<\infty$ it follows that
\[
[\Omega:\F_q(t')]\ge [\Omega:\Omega^A]=\abs{A}.
\]
Thus
\[
\abs{\langle\sigma\rangle G}\ge\abs{G}\cdot[\F_Q:\F_q]\ge \abs{A},
\]
so since $A\supseteq\langle\sigma\rangle G$ we conclude that $A=\langle\sigma\rangle G$ and that $\F_q(t')=\Omega^A$, whence $\Omega/\F_q(t')$ is Galois with Galois group $A$.
%
%
Likewise $\Omega/\F_q(v')$ is Galois with Galois group $J:=\langle H,\sigma\rangle$, and $J=\langle \sigma\rangle H$.

It remains only to show that $f(\sqrt{a}v)=\sqrt{a}^r t$.
Writing $w:=u^{(r-1)/2}$, we have
\[
t=\frac{(w^{2r+2}-1)^{(r+1)/2}}{w^r(w^2-1)^{(r^2+1)/2}}
\]
and
\[
v=w+w^{-1}.
\]
Thus
\[
E_r(v,1) = \frac{w^{r+1}-w^{-r-1}}{w-w^{-1}}=\frac{w^{2r+2}-1}{w^r(w^2-1)}
\]
and
\[
v^2-4=(w-w^{-1})^2
\]
so that
\[
f_0(X):=\frac{E_r(X,1)^{(r+1)/2}}{(X^2-4)^{(r^2-r)/4}}
\]
satisfies
\[
f_0(v)=t.
\]
We have
\begin{equation}\label{eq1}
E_r\Bigl(\frac X{\sqrt{a}},1\Bigr)=\frac{E_r(X,a)}{\sqrt{a}^{r}}
\end{equation}
and
\begin{equation}\label{eq2}
\Bigl(\frac{X}{\sqrt{a}}\Bigr)^2-4=\frac1a (X^2-4a).
\end{equation}
Dividing the $(r+1)/2$-th power of \eqref{eq1} by the $(r^2-r)/4$-th power of \eqref{eq2} yields
\[
f_0\Bigl(\frac X{\sqrt{a}}\Bigr)=f(X)\sqrt{a}^{(r^2-r)/2-(r^2+r)/2}=\frac{f(X)}{\sqrt{a}^{r}},
\]
so that
\[
f(v')=\sqrt{a}^r f_0(v)=\sqrt{a}^r t = t'. \qedhere
\]
\end{proof}


\section{Proof of Theorem~\ref{main}}\label{proof}

In this section we prove Theorem~\ref{main}.  
We use the notation from Proposition~\ref{gal} and in the list preceding that result.  We begin by proving the following properties of the Galois groups appearing in Proposition~\ref{gal}:

\begin{prop}\label{gp}
The following are true:
\renewcommand{\theenumi}{\ref{gp}.\arabic{enumi}}
\renewcommand{\labelenumi}{(\thethm.\arabic{enumi})}
\begin{enumerate}
\item\label{p5} if $r\ne 9$ then there are no groups strictly between $H$ and $G$
\item\label{p6} $J$ does not contain any nontrivial normal subgroups of $A$
\item\label{p7} $H$ and $J$ have a unique common orbit in their action by left multiplication on the set $A/J$ of left cosets of $J$ in $A$.
\end{enumerate}
\end{prop}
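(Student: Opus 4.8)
The three parts concern the subgroup configuration $H \le J \le A$ together with $H \le G \trianglelefteq A$, where $G \cong \PSL_2(r)$, the group $H$ is the dihedral group of order $r-1$ normalizing the split torus fixing $\{0,\infty\}\subseteq\bP^1(\F_r)$, and $A=\langle\sigma\rangle G$, $J=\langle\sigma\rangle H$. I will use two structural facts throughout. First, since one checks readily that $G\cap J=H$ and $[A:J]=[G:H]=(r^2+r)/2$, the natural map $G/H\to A/J$ is a bijection, so as a $G$-set $\Lambda:=A/J$ is the action of $\PSL_2(r)$ on the unordered pairs of points of $\bP^1(\F_r)$, with $H$ the stabilizer of $\{0,\infty\}$; moreover $\sigma$ fixes this pair and acts on $\bP^1(\F_r)$ as a semilinear map $c\mapsto\eta c^q$ with $\eta\in\F_r^*$ a fixed nonsquare. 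Second, I will extract from the hypothesis $\ord_2(\ell)\le\ord_2(k)$, by short $2$-adic computations, that $d:=[\F_Q:\F_q]$ is even, that every element of the subfield $\F_{p^{\gcd(\ell,2k)}}^*$ is a square in $\F_r^*$, and that every element of $\F_r^*$ of order dividing $q+1$ is a square in $\F_r^*$.

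For (\ref{p5}) I must show $H$ is maximal in $G\cong\PSL_2(r)$. I would invoke Dickson's classification of subgroups of $\PSL_2$: the normalizer of a split torus is maximal in $\PSL_2(r)$ for every $r\ge 13$, one checking directly that $H$ is contained in no Borel subgroup, in no normalizer $D_{r+1}$ of a nonsplit torus, in no subfield subgroup, and (on grounds of order) in none of the exceptional subgroups $A_4$, $S_4$, $A_5$. Among $r=p^{2k}$ the only value below $13$ is $r=9$, where indeed $H\cong D_8$ lies in a subgroup $S_4<A_6\cong\PSL_2(9)$; since $r=9$ is excluded, $H$ is maximal.

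For (\ref{p6}), let $N\trianglelefteq A$ with $N\le J$. As $G$ is simple and normal in $A$, the normal subgroup $N\cap G$ of $A$ equals $1$ or $G$. Here I first record that $J\cap G=H$: an element $\sigma^i h$ of $J$ (with $h\in H$) acts trivially on $\F_Q$ exactly when $d\mid i$, and $\sigma^d\in H$ because $d$ even makes the scaling factor of $\sigma^d$ a square. If $N\cap G=G$ then $G\le J\cap G=H$, impossible since $|H|<|G|$. If $N\cap G=1$ then $[N,G]\le N\cap G=1$, so $N\le C_A(G)$, and it suffices to prove $C_A(G)=1$. A nontrivial $\alpha\in C_A(G)$ is not in $G$ (as $Z(G)=1$), so its field-automorphism part is nontrivial on $\F_Q$; but centralizing $\PSL_2(r)$ forces that part to induce the trivial field automorphism of $\F_r$ (via $\Aut(\PSL_2(r))\cong\PGL_2(r)\rtimes\Gal(\F_r/\F_p)$ and $N_{\PGL_2(\F_Q)}(\PSL_2(r))=\PGL_2(r)$), while $\alpha$ fixes $\F_q$; being trivial on both $\F_r$ and $\F_q$, it is trivial on the compositum $\F_Q$, a contradiction. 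Hence $N=1$.

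For (\ref{p7}), note that $\sigma$ normalizes $H$ (it fixes $\{0,\infty\}$), so $H\trianglelefteq J$ and $J/H$ is cyclic generated by the image of $\sigma$; consequently a subset of $\Lambda$ is both an $H$-orbit and a $J$-orbit exactly when it is a $\sigma$-invariant $H$-orbit, and I must show there is only one such. The pair $\{0,\infty\}$ gives one, so I would classify the remaining $H$-orbits on unordered pairs into those meeting $\{0,\infty\}$ in one point and those $\{a,b\}$ with $a,b\in\F_r^*$; the latter are governed by the $H$-invariant unordered ratio $\{a/b,b/a\}$, refined by a square class when that ratio is a square. The decisive point is that $\sigma$ sends $c\mapsto\eta c^q$ with $\eta$ nonsquare: on pairs meeting $\{0,\infty\}$ it flips the square class of the free coordinate and so interchanges the two orbits of that type, and on a pair with nonzero coordinates it sends the ratio $\rho$ to $\rho^q$, so a $\sigma$-invariant orbit forces $\rho^q\in\{\rho,\rho^{-1}\}$. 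In the case $\rho^q=\rho$ one has $\rho\in\F_{p^{\gcd(\ell,2k)}}^*$, and in the case $\rho^q=\rho^{-1}$ one has $\rho^{q+1}=1$; by the arithmetic facts above $\rho$ is then a square, which puts the orbit into the split situation where $\sigma$ again flips a square-class invariant and interchanges the two orbits with that ratio. Thus no nontrivial orbit is $\sigma$-invariant and the common orbit is unique. This part is the main obstacle: the heart of the matter is the square/nonsquare bookkeeping — the square-class refinement of the $H$-orbits, and the verification that $\ord_2(\ell)\le\ord_2(k)$ is exactly what forces the relevant ratios $\rho$ to be squares, so that the nonsquare twist $\eta$ built into $\sigma$ moves every nontrivial orbit — whereas (\ref{p5}) and (\ref{p6}) are comparatively routine given Dickson's theorem and the structure of $\Aut(\PSL_2(r))$.
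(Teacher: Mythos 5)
Your argument is correct; part \eqref{p5} follows essentially the paper's route, while \eqref{p6} and \eqref{p7} are proved by genuinely different means. For \eqref{p5} both proofs rest on the Moore--Wiman--Dickson subgroup classification; the paper isolates the one nonobvious case (that $H$ cannot lie in a subfield subgroup, necessarily $\PGL_2(\sqrt{r})$, which it excludes by intersecting a putative copy of $H$ with $\PSL_2(\sqrt{r})$), whereas you fold this into the quoted maximality of the split-torus normalizer for $r\ge 13$ --- the subfield case is the only one that deserves a sentence of detail, since for square $r$ it is the sole serious candidate for an overgroup of $H$. For \eqref{p6} the paper argues from scratch: it conjugates an element $n$ of a normal subgroup $N\le J$ by every translation $u\mapsto u+w$ and reads off from the requirement that the conjugates remain in $J$ that $n$ is trivial; your reduction via simplicity of $G$ and $[N,G]\le N\cap G=1$ to showing $C_A(G)=1$, settled through the splitting $\Aut(\PSL_2(r))\cong\PGL_2(r)\rtimes\Gal(\F_r/\F_p)$, is shorter and more conceptual but imports more group theory (it also needs your observation that $G$ is exactly the kernel of the restriction $A\to\Gal(\F_Q/\F_q)$, which in turn uses that $d=[\F_Q:\F_q]$ is even so that $\sigma^d\in H$). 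The real divergence is in \eqref{p7}: the paper stays inside $A$, converting the common-orbit hypothesis into the double-coset condition $\sigma\theta\in H\theta\sigma H$ and running a four-case computation on the entries $b,c,d,e$ of $\theta$, each case collapsing because a square in $\F_r^*$ would have to equal a nonsquare; you instead transport the action to unordered pairs of points of $\bP^1(\F_r)$ (using $A=GJ$ and $G\cap J=H$), parametrize the nontrivial $H$-orbits by the unordered ratio together with a square-class refinement, and check that $\sigma$ --- Frobenius twisted by a nonsquare scaling --- moves every such orbit. The two proofs invoke the same arithmetic consequences of $\ord_2(\ell)\le\ord_2(k)$: your step that $\rho^q=\rho$ or $\rho^{q+1}=1$ forces $\rho$ to be a square is precisely the paper's step that $\gamma\in\F_q\cap\F_r\subseteq\F_{\sqrt{r}}$ is a square in $\F_r^*$. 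What your version buys is a transparent picture of the permutation action (at the cost of verifying the orbit classification, including the slightly special ratio $\rho=-1$); what the paper's buys is a self-contained, if more opaque, calculation requiring no identification of the coset space.
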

\renewcommand{\theenumi}{\arabic{enumi}}
\renewcommand{\labelenumi}{(\arabic{enumi})}

\begin{rmk}
The result \eqref{p7} can be deduced from \cite[Ex.~3.19]{GMS}, whose proof is suitable for expert group theorists.  We give a more elementary and self-contained proof for the benefit of a wider range of readers.
\end{rmk}

\begin{proof}[Proof of Proposition~\ref{gp}]
We first prove \eqref{p5}.
The subgroups of the group $\PSL_2(r)$ were determined around 1900 by Moore \cite{Moore} and Wiman \cite{Wiman} (see also \cite[\S 260]{Di3} or \cite[\S 3.6]{Suzuki}). Inspection of the list of these groups shows that if $r\ne 9$ then each proper subgroup of $\PSL_2(r)$ with order a proper multiple of $r-1$ is isomorphic to $\PGL_2(\sqrt{r})$.
%
%
If $r\ne 9$ and $\PGL_2(\sqrt{r})$ had a subgroup $T$ isomorphic to $D_{(r-1)/2}$ then $T':=T\cap \PSL_2(\sqrt{r})$ would have index at most $2$ in $T$; but by the Moore--Wiman result $\PSL_2(\sqrt{r})$ has no cyclic or dihedral subgroups of order divisible by $(r-1)/2$, so no such $T'$ exists, which proves \eqref{p5}.
%

We now prove \eqref{p6}.  Note that each element of $J$ maps $u\mapsto\zeta u^e$ with $\zeta\in\F_r^*$ and $e\in\{1,-1\}$, and also acts on $\F_Q$ as $q^i$-th powering for some $i\ge 0$.  Let $N$ be a normal subgroup of $A$ which is contained in $J$, and let $n$ be any element of $N$, so that $n(u)=\zeta u^e$ and $n\mid_{\F_Q}$ is the $q^i$-th power map.  For any $w\in \F_r^*$, let $\theta$ be the unique element of $G$ which maps $u\mapsto u+w$.  Then $\rho:=\theta^{-1}n\theta$ maps $u\mapsto \zeta (u-w)^e+w^{q^i}$.  Since $\rho$ is in $N$, and hence is in $J$, we must have $e=1$, and also $\zeta w=w^{q^i}$ for each $w\in\F_r^*$, whence $\zeta=1$ and the $q^i$-th power map acts as the identity on $\F_r$, and hence also on $\F_Q$.  Thus $n$ is the identity element of $N$, so $N=1$, which proves \eqref{p6}.

It remains to prove \eqref{p7}.  The coset $J$ in $A/J$ is a fixed point in the actions of both $J$ and $H$ on $A/J$, so we must show that no other coset lies in a common orbit of $J$ and $H$.
Pick $\theta\in A\setminus J$, and suppose that the $J$-orbit of the coset $\theta J\in A/J$ is also an $H$-orbit.  Since $J=\langle\sigma\rangle H$, this says that the sets $H\theta J$ and $\sigma H\theta J$ are identical, so that $\sigma \theta\in H\theta J$.  Thus $\sigma \theta\in H\theta S$ where $S$ is the set of elements of $J$ whose restriction to $\F_Q$ equals that of $\theta^{-1}\sigma\theta$, or equivalently that of $\sigma$.  It follows that $S=\sigma H$, so that $\sigma \theta\in H\theta\sigma H$.  Write $\theta(u):=(bu+c)/(du+e)$ and suppose $\theta$ acts on $\F_Q$ as the $q^i$-th power map. 
Note that $H$ consists of the elements in $G$ which preserve $\{0,\infty\}$; since $\sigma$ preserves $\{0,\infty\}$, the facts that $A=\langle\sigma\rangle G$ and $J=\langle\sigma\rangle H$ implies that $J$ consists of the elements in $G$ which preserve $\{0,\infty\}$.  Thus since $\theta\notin J$ we see that at most one of $b,c,d,e$ equals zero.  Then
$\theta\sigma(u)=\zeta_0^{q^i}(bu+c)/(du+e)$, so that
$\sigma\theta(u)=(b^q\zeta_0u+c^q)/(d^q\zeta_0u+e^q)$ equals one of the following for some $\zeta,\zeta'\in (\F_r^*)^2$:
\begin{gather}
\label{1}
\zeta^{q^{i+1}} \zeta_0^{q^i}\frac{b\zeta' u+c}{d\zeta' u+e} \\
\label{2}
\zeta^{q^{i+1}} \zeta_0^{q^i}\frac{b\zeta'+cu}{d\zeta'+eu} \\
\label{3}
\zeta^{q^{i+1}} \zeta_0^{-q^i}\frac{d\zeta' u+e}{b\zeta' u+c} \\
\label{4}
\zeta^{q^{i+1}} \zeta_0^{-q^i}\frac{d\zeta'+eu}{b\zeta'+cu}.
\end{gather}
Note that $\zeta^{q^{i+1}}\zeta_0^{q_i}$ and $\zeta^{q^{i+1}}\zeta_0^{-q^i}$ are nonsquares in $\F_r^*$.  First suppose that $\sigma\theta(u)$ equals \eqref{1} or \eqref{3}.  Then $\sigma\theta(\infty)=b^q/d^q$ equals a nonsquare in $\F_r^*$ times either $b/d$ or $d/b$, so we must have $0\in\{b,d\}$.
Likewise, comparing values at $0$ shows $0\in\{c,e\}$, contradicting the condition that at most one of $b,c,d,e$ equals zero.
Next suppose that $\sigma\theta(u)$ equals \eqref{2}.
Comparing values at $\infty$ yields $(b/d)^q=\zeta^{q^{i+1}}\zeta_0^{q^i}c/e$, so since at least three of $b,c,d,e$ are nonzero it follows that all four of them are nonzero.  The ratio of the two coefficients of the numerator of $\sigma\theta(u)$ is $(b/c)^q\zeta_0$, and the corresponding ratio in \eqref{2} is $c/(b\zeta')$, so that $(c/b)^{q+1}=\zeta'\zeta_0$, which is impossible since the left side is a square in $\F_r^*$ while the right side is not.  Finally, suppose that $\sigma\theta(u)$ equals \eqref{4}.  Equating values at infinity yields
\begin{equation}\label{use}
\Bigl(\frac bd\Bigr)^q= \zeta^{q^{i+1}}\zeta_0^{-q^i}\frac ec.
\end{equation}  Since at most one of $b,c,d,e$ equals zero, it follows that none of $b,c,d,e$ are zero, so the ratio between the coefficients of the numerator of $\sigma\theta(u)$ is $(b/c)^q\zeta_0$, which equals $e/(d\zeta')$.  Equating ratios between coefficients of denominators yields $(d/e)^q\zeta_0=c/(b\zeta')$.  Thus
\[
\Bigl(\frac bc\Bigr)^q\frac de=\frac1{\zeta_0\zeta'}=\Bigl(\frac de\Bigr)^q\frac bc,
\]
so that
\[
\Bigl(\frac bc\Bigr)^{q-1}=\Bigl(\frac de\Bigr)^{q-1},
\]
whence
\[
\frac bc=\gamma \frac de
\]
for some $\gamma\in\F_q^*$.  Since also $\gamma\in\F_r^*$, we have $\gamma\in\F_q\cap\F_r\subseteq\F_{\sqrt{r}}$, so that $\gamma$ is a square in $\F_r^*$.  But \eqref{use} shows that the nonsquare $\zeta^{q^{i+1}}\zeta_0^{-q^i}$ in $\F_r^*$ equals
\[
\Bigl(\frac bd\Bigr)^q\cdot\frac ce = \gamma\cdot \Bigl(\frac{b^{(q-1)/2}c}{d^{(q+1)/2}}\Bigr)^2,
\]
which is a contradiction since the right side is a square in $\F_r^*$.
\end{proof}

We now prove Theorem~\ref{main}.

\begin{proof}[Proof of Theorem~\ref{main}]
By the Galois correspondence, \eqref{p6} implies that $\Omega$ is the Galois closure of the field extension $\Omega^J/\Omega^A$, which equals $\F_q(v')/\F_q(t')$.  Since $v'$ is transcendental over $\F_q$ and $t'=f(v')$, this shows that $A$ is the arithmetic monodromy group of $f(X)$, and also that $\deg(f)=[A:J]=(r^2+r)/2$.  Moreover, since $\F_Q$ is the algebraic closure of $\F_q$ in $\Omega=\F_Q(u)$, the geometric monodromy group of $f(X)$ is $\Gal(\Omega/\F_Q(t'))=G$.
By Lemma~\ref{basic}, if $r\ne 9$ then \eqref{p5} implies that $f(X)$ is indecomposable over $\bar\F_q$ (and hence over $\F_q$).  If $r=9$ then it is easy to verify directly that $J$ is maximal in $A$, so that $f(X)$ is indecomposable over $\F_q$.
Likewise, Lemma 2.4 and \eqref{p7} imply that $f(X)$ is exceptional over $\F_q$.  Finally, if $n$ is an odd positive integer then $a$ is a nonsquare in $\F_{q^n}^*$ and $\ord_2(\ell n)=\ord_2(\ell)\le\ord_2(k)$, so the hypotheses of Proposition~\ref{gal} remain valid if we replace $\ell$, $q$, and $Q$ by $\ell n$, $q^n$, and $p^{\lcm(2k,\ell n)}$, respectively.  Thus $f(X)$ is exceptional over $\F_{q^n}$, so that $f(X)$ permutes $\bP^1(\F_{q^{mn}})$ for infinitely many $m$, which implies that $f(X)$ permutes $\bP^1(\F_{q^n})$.
\end{proof}


\section{The ramification of $f(X)$}
\label{ram}

In this section we determine the ramification of the rational function $f(X)$ from Theorem~\ref{main}, which shows in particular that $f(X)$ is not a linear change of variables of a polynomial.  We use the notation from Proposition~\ref{gal} and in the list preceding that result.

\begin{prop}\label{pr}
Each $d\in \bP^1(\bar\F_q)\setminus\{0,\infty\}$ has $(r^2+r)/2$ preimages under $f(X)$, each of which is unramified.  The set $f^{-1}(\infty)$ has size $3$, and contains one point with ramification index $r$ and two with ramification index $(r^2-r)/4$.  The set $f^{-1}(0)$ has size $r$, and all its points have ramification index $(r+1)/2$.  The inertia group of $u=c$ relative to the extension $\bar\F_q(u)/\bar\F_q(t')$ has order
\[
\begin{array}{c@{\mathrel{}{}}l@{}}
1 &\qquad\text{ if }c\in\bar\F_r\setminus\F_{r^2} \\[\smallskipamount]
\frac{r+1}2&\qquad\text{ if }c\in \F_{r^2}\setminus\F_r \\[\smallskipamount]
\frac{r^2-r}2& \qquad\text{ if }c\in\bP^1(\F_r).\smallskip
\end{array}
\]
\noindent
This group is cyclic in the second case, and in the third case is a Borel subgroup of\/ $\PSL_2(q)$ (i.e., it is conjugate to the group of upper-triangular matrices).
For each $c$, the second ramification group in the lower numbering is trivial, and the first ramification group is the unique Sylow $p$-subgroup of the inertia group.
\end{prop}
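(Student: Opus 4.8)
The plan is to first analyze the ramification in the Galois cover $\bar\F_q(u)/\bar\F_q(t')$ — which is exactly the content of the last three sentences — and then transfer that information to the degree-$(r^2+r)/2$ map $f\colon v'\mapsto t'$ using the subgroup $H$. The key reduction is that $\bar\F_q(u)/\bar\F_q(t')$ is Galois with group $G\cong\PSL_2(r)$ acting on the $u$-line by M\"obius transformations, so the decomposition group of the place $u=c$ is precisely the stabilizer $\Stab_G(c)$ of $c\in\bP^1(\bar\F_q)$ under this action; and since the residue field $\bar\F_q$ is algebraically closed the inertia group coincides with the decomposition group. Thus the inertia computation reduces to orbit--stabilizer bookkeeping for $\PSL_2(r)$ on $\bP^1$.

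For that bookkeeping I would invoke the standard description of point stabilizers in $\PSL_2(r)\le\PGL_2(r)$: a nontrivial element has both fixed points in $\bP^1(\F_{r^2})$, so $\Stab_G(c)=1$ when $c\in\bar\F_r\setminus\F_{r^2}$; the stabilizer of a point of $\bP^1(\F_r)$ is a Borel subgroup (conjugate to $\{u\mapsto\alpha u+\beta:\alpha\in(\F_r^*)^2,\ \beta\in\F_r\}$) of order $(r^2-r)/2$; and the stabilizer of a point of $\F_{r^2}\setminus\F_r$ is a cyclic non-split torus of order $(r+1)/2$. To match the two ramified fibers with $t=0$ and $t=\infty$ I would evaluate $t=(u^{r^2}-u)^{(r+1)/2}/(u^r-u)^{(r^2+1)/2}$ (which has the same zeros and poles as $t'=\sqrt{a}^r t$): at $c\in\F_{r^2}\setminus\F_r$ the numerator vanishes but the denominator does not, so $t(c)=0$ with $\ord_c(t)=(r+1)/2$; while at $c\in\F_r$, writing $u=c+s$ gives $u^r-u=s^r-s$ and $u^{r^2}-u=s^{r^2}-s$, each with a simple zero at $s=0$, so $t$ has a pole of order $(r^2-r)/2$ and $t(c)=\infty$. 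Since the points of nontrivial inertia are exactly those in $\bP^1(\F_{r^2})$, this shows the branch locus is $\{0,\infty\}$ and establishes the stated inertia orders.

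Next I would compute the lower-numbering ramification groups $G_i=\{g:\ord_s(g(s)-s)\ge i+1\}$. In the two tame cases the inertia order ($1$, or $(r+1)/2$ which is prime to $p$ since $r=p^{2k}$) is coprime to $p$, so $G_1$ is trivial, i.e.\ equal to the trivial Sylow $p$-subgroup, and there is nothing further to check. For the wild case I would conjugate the fixed point to $u=\infty$, take the uniformizer $s=1/u$, and compute directly: an element $u\mapsto\alpha u+\beta$ of the Borel acts by $s\mapsto s/(\alpha+\beta s)=\alpha^{-1}s-\alpha^{-2}\beta s^2+\cdots$, so $\ord_s\bigl(g(s)-s\bigr)=1$ when $\alpha\ne 1$ and equals $2$ when $\alpha=1,\ \beta\ne 0$. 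Hence $G_1$ is exactly the translation subgroup $\{u\mapsto u+\beta\}$, which is the unique (normal) Sylow $p$-subgroup of the Borel, of order $r$, and $G_2=1$; a Riemann--Hurwitz count using these different exponents serves as a consistency check.

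Finally I would deduce the ramification of $f$. Since $\bar\F_q(v')=\bar\F_q(u)^H$, the $f$-preimages of a place $d$ of $\bar\F_q(t')$ correspond to the $H$-orbits on the fiber over $d$, equivalently to double cosets $H\backslash G/I_d$, and the preimage attached to $H\sigma I_d$ has ramification index $\abs{I_d}/\abs{H\cap\sigma I_d\sigma^{-1}}$. For $d\notin\{0,\infty\}$ one has $I_d=1$, so the fiber is a free $G$-set and there are $\abs{G}/\abs{H}=(r^2+r)/2$ unramified preimages. For $d=\infty$, identifying $G/I_\infty$ with $\bP^1(\F_r)$, the orbits of $H=\{u\mapsto\zeta u^{\pm1}:\zeta\in(\F_r^*)^2\}$ are $\{0,\infty\}$, the nonzero squares, and the nonsquares — three orbits — and computing $H\cap(\text{point stabilizer})$ in each case (order $(r-1)/2$ for $\{0,\infty\}$, order $2$ otherwise) gives ramification indices $r,\ (r^2-r)/4,\ (r^2-r)/4$. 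For $d=0$, identifying $G/I_0$ with $\F_{r^2}\setminus\F_r$, I would check that no nontrivial element of $H$ fixes a point there (a relation $u\mapsto\zeta/u$ fixing $c$ forces $c^2=\zeta\in\F_r$ a square, hence $c\in\F_r$), so all $H$-orbits are free of size $r-1$, yielding $r$ preimages each of index $(r+1)/2$. I expect the main obstacle to be precisely this last bookkeeping — pinning down the orbit stabilizers $H\cap\sigma I_d\sigma^{-1}$ on $\bP^1(\F_r)$ and $\F_{r^2}\setminus\F_r$ so that the ramification indices (not merely the orbit counts) come out exactly — together with the careful evaluation of $t$ at the degenerate fibers; the wild local computation and the $\PSL_2(r)$ stabilizer facts are routine by comparison.
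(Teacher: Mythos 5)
Your proposal is correct, and for the two main components it takes a genuinely different route from the paper. For the inertia groups, the paper computes the ramification of $g(X)=\sqrt{a}^{\,r}(X^{r^2}-X)^{(r+1)/2}/(X^r-X)^{(r^2+1)/2}$ directly --- a gcd computation for the fibres over $0$ and $\infty$ plus a derivative computation to rule out ramification elsewhere --- and only afterwards names the inertia group at $u=\infty$; you instead read the inertia groups off as point stabilizers of $\PSL_2(r)$ acting on $\bP^1$, which delivers the orders, the cyclic/Borel structure, and the containment of the branch locus in $\{0,\infty\}$ all at once with no derivative computation (you still need the short evaluation of $t$ at $\bP^1(\F_{r^2})$ to match the two fibres with $0$ and $\infty$, which you do correctly). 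For the ramification of $f$ itself, the paper argues from the explicit formula $E_r(X,a)^{(r+1)/2}/(X^2-4a)^{(r^2-r)/4}$, using the functional equation defining $E_r$ to see that $E_r(X,a)$ is squarefree and coprime to $X^2-4a$; you instead run the double-coset computation $H\backslash G/I_d$ with ramification indices $\abs{I_d}/\abs{H\cap\sigma I_d\sigma^{-1}}$, and your bookkeeping checks out: three $H$-orbits on $\bP^1(\F_r)$ with $H$-stabilizers of orders $(r-1)/2$, $2$, $2$ giving indices $r$, $(r^2-r)/4$, $(r^2-r)/4$, and $r$ free $H$-orbits on $\F_{r^2}\setminus\F_r$ giving index $(r+1)/2$. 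The higher-ramification computation with the uniformizer $1/u$ is essentially identical to the paper's. What the paper's route buys is independence from the subgroup structure of $\PSL_2(r)$ and a direct check against the defining formula of $f$; what yours buys is a uniform structural explanation that never touches that formula. One detail where your version is the one to keep: the Borel stabilizer inside $\PSL_2(r)$ consists of the maps $u\mapsto\alpha u+\beta$ with $\alpha\in(\F_r^*)^2$, of order $(r^2-r)/2$, exactly as you state.
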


\begin{proof}
Recall that $t'=g(u)$ where
\[
g(X) = \sqrt{a}^r \frac{(X^{r^2}-X)^{(r+1)/2}}{(X^r-X)^{(r^2+1)/2}}.
\]
Since 
\[
\gcd((X^{r^2}-X)^{(r+1)/2},(X^r-X)^{(r^2+1)/2})=(X^r-X)^{(r+1)/2},
\]
it follows that the $g$-preimages of $\infty$ are the elements of $\bP^1(\F_r)$, each of which has ramification index $(r^2-r)/2$, and likewise the $g$-preimages of $0$ are the elements of $\F_{r^2}\setminus\F_r$, each of which has ramification index $(r+1)/2$.  If $g(X)$ ramified at an element $\alpha\in\bar\F_r\setminus\F_{r^2}$ then $\alpha$ would be a zero of
\begin{align*}
\sqrt{a}^{-r} g'(X)(X^r-X)^{r^2+1} =& -\frac{r+1}2 (X^{r^2}-X)^{(r-1)/2} (X^r-X)^{(r^2+1)/2} \\
&+ \frac{r^2+1}2 (X^r-X)^{(r^2-1)/2} (X^{r^2}-X)^{(r+1)/2} \\
&\!\!\!\!\!\!\!\!\!\!\!\! = \frac{r+1}2 (X^{r^2}-X)^{(r-1)/2}(X^r-X)^{(r^2-1)/2} h(X)
\end{align*}
where
\[
h(X):=-(X^r-X) +  (X^{r^2}-X) = X^{r^2}-X^r = (X^r-X)^r;
\]
but this is impossible since $\alpha\notin\F_{r^2}$.  Thus all ramification in $\bar\F_q(u)/\bar\F_q(t')$ occurs over $t'=0$ or $t'=\infty$.  Hence also all ramification in $\bar\F_q(v')/\bar\F_q(t')$ occurs over $t'=0$ or $t'=\infty$.  The equation \eqref{dickson} shows that all zeroes of $E_r(X+aX^{-1},a)$ have multiplicity $1$, so that also all zeroes of $E_r(X,a)$ have multiplicity $1$.  Also $X^2-4a$ is squarefree, and the polynomials $E_r(X,a)$ and $X^2-4a$ are coprime since $\deg(f)=(r^2+r)/2$ (or alternately, since substituting $X+a/X$ into these two polynomials yields rational functions which visibly have no common zeroes).
Thus the ramification of $f(X)$ is as described in Proposition~\ref{pr}.

We now compute the inertia groups and higher ramification groups in $\bar\F_q(u)/\bar\F_q(t')$.  It suffices to do this at one point of $\bar\F_q(u)$ lying over each of $t'=0$ and $t'=\infty$, since the filtrations of ramification groups at any two points of $\bar\F_q(u)$ lying over the same point of $\bar\F_q(t')$ are conjugate under $G$.
The inertia group at $u=\infty$ consists of the maps $u\mapsto bu+c$ with $b\in\F_r^*$ and $c\in\F_r$.  The first ramification group at $u=\infty$ is the (unique) Sylow $p$-subgroup of this group, which consists of the maps $u\mapsto u+c$ with $c\in\F_r$.  A uniformizer for $u=\infty$ is $1/u$, and if $\rho\colon u\mapsto u+c$ with $c\in\F_r^*$ then \[
\rho\Bigl(\frac1u\Bigr)-\frac1u=\frac1{u+c}-\frac1u=\frac{-c}{u^2+cu}=\frac{-c}{u^2(1+\frac cu)}
\]
vanishes to order $2$ at $u=\infty$, whence $\rho$ is not in the second ramification group at $u=\infty$.  Finally, for any $\alpha\in\F_{r^2}\setminus\F_r$ the ramification index of $u=\alpha$ in $\bar\F_q(u)/\bar\F_q(t')$ is $(r+1)/2$, and hence is coprime to $p$, so the inertia group is cyclic of order $(r+1)/2$ and the first ramification group is trivial.
\end{proof}

\begin{rmk}
The fact that the second ramification groups are trivial also
follows from the combination of the Riemann--Hurwitz genus formula and Hilbert's different formula applied to the extension $\bar\F_q(u)/\bar\F_q(t')$.  This conclusion holds true more generally for finite Galois extensions $L/K$ of algebraic function fields in characteristic $p$ in which the degree-$0$ part of the divisor class group of $L$ has a subgroup isomorphic to $(\mathbb Z/p\mathbb Z)^g$, where $g$ denotes the genus of $L$; cf.\ \cite[Lemma~2.4]{GRZ}.
\end{rmk}



\end{document}